\theoremstyle{plain}
\newcommand*{\Cat}{\ensuremath{\mathbb{C}\mathrm{at}}}
\newcommand*{\Mod}{\operatorname{\mathbb{M}\mathrm{od}}}
\DeclareMathOperator{\Mnd}{{\mathbb {M}\mathrm{nd}}}
\DeclareMathOperator{\Adj}{{\mathbb {A}\mathrm{dj}}}
\DeclareMathOperator{\Ar}{{\mathbb {A}\mathrm{r}}}
\title{Notes on $n$-Transformations by Theories ($n\in {\mathbb{N}^*}$)}
\author{Camell Kachour}
\begin{document}
\maketitle
\begin{abstract}
 In \cite{ber} Clemens Berger showed that weak categories of Michael Batanin \cite{bat} can be defined
 as model of a certain kind of theories that he called ``homogeneous theorie''. By using the work
 of Mark Weber on the Abstract Nerves \cite{web} for the
 specific case of the $n$-Transformations  \cite{k}, we show that we can
 also define Weak Functors, Weak Natural Transformations, and so on, as models of certain kind of
 colored theories which are homogeneous as well. 
\end{abstract}

\tableofcontents

In "pursuing stacks" \cite{gro} Alexander Grothendieck gave 
his definition of weak omega groupoids where he see
a weak omega groupoid as a model of some specific theories called "coherateurs" and a
slight modification of this definition led to a notion of weak omega category \cite{mal}.
 Thus in the spirit of Grothendieck, weak and higher structures can be seen as model
 of certain kind of theories. Later Clemens Berger in \cite{ber} gave a rigourous description of 
 the weak omega categories of Michael Batanin \cite{bat} in term of models for some kind of theories that he called "homogeneous theories", where he followed for the higher dimension, the idea of Boardman-Vogt whose 
 have shown in \cite{board}  that building an algebra for an operad is closely related to the construction of a model for some
 adapted homogeneous theory. One of the main idea of Clemens Berger was to build a Nerves for each
 weak omega category of Batanin as we do with the classical Nerve of a category \cite{mac} and he proved that
  the induced Nerve Functor is actually fully-faithfull as the classic one. He also characterised 
  presheaves living in the essential image of a such Nerve as some presheaves verifying some
  generalized Segal condition where the classical Segal condition is a particular case. 
  The main ingredient which allowed him to see weak categories as some specific presheaves 
  verifying  the Segal condition are the category of Trees of Michael Batanin \cite{bat}. Actually 
  in the formalism developped by Clemens Berger the classical case of the categories can be recovered with the
   category of the trees with level one considered as a subcategory of the category of graphs
     $\mathbb{G}r$  and which its free category is exactly the simplicial category $\Delta$. Suprisingly, this idea
  of Clemens Berger allow us also to see all these constructions as a generalisation of the process of
  building a Lawvere Theory. Indeed a Lawvere theory can be seen as a functor 
  which is an identity on the objects from the skelett of the category of the
  finite sets $\mathbb{N}$ to a small category and he extended this idea by considering 
  the category of graphical trees $\Theta_{0}$ of Michael Batanin instead of the category $\mathbb{N}$ to built his theories. Thus in the
  spirit of Clemens Berger we can see the category  $\Delta$ as a kind of Lawvere Theory, but also
  the  category $\Theta$ of Joyal, as an other kind of Lawvere Theory, etc.
  
  But the first step to be able to built a generalisation of  these kind of "generalized Lawvere  Theories" is to recognize the basic datas which allowed us to construct them and these basic datas should also produce at the same time a theory, an associated nerve, a notion of "Segal Condition" and a theorem which give a caracterisation of the essential image of the nerve in term of this "Segal Condition". These ideas are implicit in the work of Clemens Berger \cite{ber}
    but Mark Weber in \cite{web} make them explicitly by defining the notion of category with arities and the notion 
  of monad with arities. These notions allow us to built all the ingredients that we need to generalize
  the classical Segal condition for categories or the "globular Segal condition" for higher categories made by
  Clemens Berger in \cite{ber}. In this paper we are going to see that the formalism developped by Mark Weber and the level of generalisation of his Abstract Nerve \cref{NerveTheorem} allow us to applied it for the specific case of the $n$-Transformations (\cite{k}). Thus 
  we are going to show that $n$-Transformations are in fact models for some specific "colored theories". 
  However we avoid to prove that these theories are homogeneous in the sense of Clemens Berger (see \cite{ber}),
  even if it is easy to see that intuitively they are, because this property which was important for 
  Clemens Berger in his proofs are less important here thanks to the \cref{NerveTheorem} of Mark Weber. This work was exposed in the Australian Category Seminar in september 2010.

   I am grateful to my Supervisors Michael Batanin and Ross Street for giving me the chance to be their student, for their trust in me and for their huge effort to allow me to work with them in Sydney.
   I am also grateful to Dominic Verity for his patience and kidness especially when he tried to explain me some basic aspect about Lawvere Theories. Finally I am grateful to Mark Weber for explaining me his theory of the Abstract Nerves  which allowed me to see quickly that the monads of the $n$-Transformations are Parametric Right Adjoint.
      
   I dedicate this work to Ross Street.

  \vspace*{1cm}   
   
\section{The Categories $\Mnd$ and $\Adj$}
 This first paragraph is devoted to recall properly the category of monads $\Mnd$ and the category of adjunctions $\Adj$ that we need to built the Coglobular Complex of Kleisli Categories of the 
 $n$-Transformations ($n\in {\mathbb{N}^*}$).
 It is easy to notice, but still important, that these categories $\Mnd$ and $\Adj$  are slightly different from those which were defined in \cite{defop} and which allowed us to built the Globular Complex of Eilenberg-Moore Categories of the $n$-Transformations. We will see in \cref{TheorieNSC} that theories of the $n$-Transformations live precisely in this complex.

\label{MndAdj}
 We recall briefly a pretty but classical construction of the 
 adjoint pair $(\Adj, \Mnd, \mathbb{K}, \mathbb{U}, \eta, \epsilon)$ which is adapted
 to work with Kleisli categories. For this paper we need only the functor $\mathbb{K}$.
 
 \subsection{Definition.} 
 \label{Def}
Here is the category of monads $\Mnd$ adapted for Keisli categories:
\begin{itemize}
\item Its class of objects $\Mnd(0)$ are monads $(\mathcal{G},(T,\eta,\mu))$
\item Its class of morphisms $\Mnd(1)$
    \[\xymatrix{(\mathcal{G},(T,\eta,\mu))\ar[r]^{(Q,q)}&(\mathcal{G}',(T',\eta',\mu'))}\]
    
  are given by functors $\xymatrix{\mathcal{G}\ar[r]^{Q}&\mathcal{G}'}$  and natural transformations
   $\xymatrix{QT\ar[r]^{q}&T'Q}$ such that we have the two following axioms:
   \end{itemize}

\begin{enumerate}

  \item $\xymatrix{Q\ar[r]^{\eta'Q}\ar[d]_{Q\eta}&T'Q\\ QT\ar[ru]_{q} }$
  
  \item $\xymatrix{QT^{2}\ar[d]_{Q\mu}\ar[r]^{qT}&T'QT\ar[r]^{T'q}&T'^{2}Q\ar[d]^{\mu'Q} \\
 QT\ar[rr]_{q}&&T'Q}$
 
  \end{enumerate}
  
  The composition of two morphisms of monads
  
  \[\xymatrix{(\mathcal{G},(T,\eta,\mu))\ar[r]^{(Q,q)}&
  (\mathcal{G}',(T',\eta',\mu'))\ar[r]^{(Q',q')}&(\mathcal{G}",(T",\eta",\mu"))}\]
  
  is given by $(Q,q)\circ (Q',q'):=(Q'\circ Q,q'Q\circ Q'q)$ and the identity monad is given by
  $1_{(\mathcal{G},(T,\eta,\mu))}:=(1_{\mathcal{G}}, 1_{T})$
  \begin{remark}
 It is not difficult to see that $(Q,q)\circ (Q',q')$ is a morphism of  $\Mnd$ and that 
  this composition and this identity put on $\Mnd$ a structure of category.
  
   \end{remark}
   
   The category $\Adj$ has the following definition:
\begin{itemize}
\item Its class of objects $\Adj(0)$ are pairs of adjunction 
$(\mathcal{A}, \mathcal{G}, L, U, \eta, \epsilon)$ where $L$ is the left adjoint of $U$ 
 \[\xymatrix{\mathcal{A}\ar@<.5 ex>[r]^{U}&\mathcal{G}\ar@<.5 ex>[l]^{L}}\]

 \item Morphisms of $\Adj(1)$ 
     \[\xymatrix{(\mathcal{A}, \mathcal{G}, L, U, \eta, \epsilon)\ar[r]^(.47){(P,Q)}&
    (\mathcal{A}', \mathcal{G}', L', U', \eta', \epsilon')}\]
    
  are given by functors $\xymatrix{\mathcal{A}\ar[r]^{P}&\mathcal{A}'}$  and 
  $\xymatrix{\mathcal{G}\ar[r]^{Q}&\mathcal{G}'}$
    such that the following square commute 
    \[\xymatrix{\mathcal{A}\ar[r]^{P}&\mathcal{A}'\\
        \mathcal{G}\ar[r]_{Q}\ar[u]^{L}&\mathcal{G}'\ar[u]_{L'}}\]

       \end{itemize}

 \subsection{Functors between $\Adj$ and $\Mnd$}  
  \label{FunctMndAdj}
  
   We have a functor
     \[\xymatrix{\Adj\ar[r]^{\mathbb{U}}&\Mnd}\]
  
 which send  the object $(\mathcal{A}, \mathcal{G}, L, U, \eta, \epsilon)$ of  $\Adj$ to the
 object $(\mathcal{G}, (UL, \eta, U\epsilon L))$ of $\Mnd$ and which send the morphism $(P,Q)$ of
   $\Adj$ to the morphism $(Q,q)$ of $\Mnd$ such that 
   $q=(U'P\epsilon \circ^{2}_{1} \eta'QU)L$ and it is
   not difficult to see that this morphism lives in $\Mnd$
   
  We also have the functor
  
     \[\xymatrix{\Mnd\ar[r]^{\mathbb{K}}&\Adj}\]
     
   which send the monad $(\mathcal{G},(T,\eta,\mu))$ to the adjunction 
   $(Kl(T), \mathcal{G}, L_{T}, U_{T}, \eta_{T}, \epsilon_{T})$ coming from the Kleisli construction.
   Objects of $Kl(T)$ are objects of  $\mathcal{G}$  and morphisms 
   $\xymatrix{G\ar[r]^{f}&G'}$ of $Kl(T)$ are given by morphisms $\xymatrix{G\ar[r]^{f}&T(G')}$
   of  $\mathcal{G}$. Also if  $\xymatrix{G\ar[r]^{g}&G'}$  live in  $\mathcal{G}$ then 
   $L_{T}(g)=\eta(G')\circ g$ and if  $\xymatrix{G\ar[r]^{f}&G'}$ live in $Kl(T)$ then 
   $U_{T}(f)=\mu(G')\circ T(f)$. Finally $\mathbb{K}$ send the morphism $(Q,q)$ of $\Mnd$
   to the morphism $(P,Q)$ of $\Adj$ such that if 
     $\xymatrix{G\ar[r]^{f}&G'}$ is a morphism of $Kl(T)$ then $P(f)=q(G')Q(f)$ 
     
    We can prove without difficulty that $\mathbb{U}\circ \mathbb{K}=1_{\Mnd}$ and also that
    $\mathbb{K}$ is left adjoint to $\mathbb{U}$       
    
 \section{Coglobular Complex of Kleisli of the $n$-Transformations ($n\in {\mathbb{N}^*}$).}
   \label{CogloKleisli}
   
   This second paragraph is just an application of the first paragraph for the specific case of the $n$-Transformations. In this
 paragraph we build explicitly the Coglobular Complex of Kleisli of the $n$-Transformations ($n\in {\mathbb{N}^*}$).   
    
 Consider the coglobular complex of $CT-\Cat_{c}$ of the globular contractible colored operads
 of the $n$-Transformations \cite{k} 
 
 \[\xymatrix{B^{0}\ar[rr]<+2pt>^{\delta^{0}_{1}}\ar[rr]<-2pt>_{\kappa^{0}_{1}}
  &&B^{1}\ar[rr]<+2pt>^{\delta^{1}_{2}}\ar[rr]<-2pt>_{\kappa^{1}_{2}}
  &&B^{2}\ar@{.>}[r]<+2pt>^{}\ar@{.>}[r]<-2pt>_{}
  &B^{n-1}\ar[rr]<+2pt>^{\delta^{n-1}_{n}}\ar[rr]<-2pt>_{\kappa^{n-1}_{n}}
  &&B^{n}\ar@{.}[r]<+2pt>\ar@{.}[r]<-2pt>&}\]

  For each $j\in \mathbb{N}$ we note $(\underline{B}^{j}, \mu^{j}, \eta^{j})$ the corresponding monads. 
  
  Given the following functors "choice of a color" for each $j\in{\{1,2\}}$
  $\xymatrix{\omega-\mathbb{G}r\ar[r]^(.4){{i_{j}}_{\ast}}&\omega-\mathbb{G}r/1\cup 2}$ which send the 
  $\omega$-graph $G$ to the bicolored $\omega$-graph  $i_{j}\circ !_{G}$ and which send a morphism 
  $f$ to $f$. It result from the morphisms of color $\xymatrix{1\ar[r]^(.42){i_{j}}&1\cup 2}$ (see \cite{k}).  
  
  By definition of the monads $\underline{B}^{0}$ and  $\underline{B}^{1}$ we have the following natural 
  transformation
       
      \[\xymatrix{%
     \omega-\mathbb{G}r\ar[d]_{{i_{1}}_{\ast}}\ar[r]^-{\underline{B}^{0}}
     &\omega-\mathbb{G}r\ar[d]^{{i_{1}}_{\ast}} 
     \\
     \omega-\mathbb{G}r/1\cup 2\ar[r]_-{\underline{B}^{1}}
     &*+!L(.4){\omega-\mathbb{G}r/1\cup 2}
       \save+U+/l.5pc/*\cir<1.25pc>{l^d},+/l1.23pc/,*=0^@{<},+/u1.25pc/*!R(.5){\labelstyle \delta^{0}_{1}}\restore
    }\]
          
 and also we have the following natural transformation
  
   \[\xymatrix{%
    \omega-\mathbb{G}r\ar[d]_{{i_{2}}_{\ast}}\ar[r]^-{\underline{B}^{0}}&    
     \omega-\mathbb{G}r\ar[d]^{{i_{2}}_{\ast}} 
     \\
     \omega-\mathbb{G}r/1\cup 2\ar[r]_-{\underline{B}^{1}}
     &*+!L(.4){\omega-\mathbb{G}r/1\cup 2}
      \save+U+/l.5pc/*\cir<1.25pc>{l^d},+/l1.23pc/,*=0^@{<},+/u1.25pc/*!R(.5){\labelstyle \kappa^{0}_{1}}\restore
     }\]
     
   Furthemore we have for each $j\geqslant 1$ the following natural transformations     
     
        \[\xymatrix{%
     \omega-\mathbb{G}r/1\cup 2\ar[d]_{1}\ar[r]^-{\underline{B}^{j}}
     &*+!L(.4){\omega-\mathbb{G}r/1\cup 2} \ar[d]^{1} 
     \\
     \omega-\mathbb{G}r/1\cup 2\ar[r]_-{\underline{B}^{j+1}}
     &*+!L(.4){\omega-\mathbb{G}r/1\cup 2}
       \save+U+/l.5pc/*\cir<1.25pc>{l^d},+/l1.23pc/,*=0^@{<},+/u1.25pc/*!R(.5){\labelstyle \delta^{j}_{j+1}}\restore
    }\]

    \[\xymatrix{%
    \omega-\mathbb{G}r/1\cup 2 \ar[d]_{1}\ar[r]^-{\underline{B}^{j}}
    &*+!L(.4){\omega-\mathbb{G}r/1\cup 2} \ar[d]^{1} 
     \\
     \omega-\mathbb{G}r/1\cup 2\ar[r]_-{\underline{B}^{j+1}}
     &*+!L(.4){\omega-\mathbb{G}r/1\cup 2}
       \save+U+/l.5pc/*\cir<1.25pc>{l^d},+/l1.23pc/,*=0^@{<},+/u1.25pc/*!R(.5){\labelstyle \kappa^{j}_{j+1}}\restore
     }\]
     
   and it is easy to see that these natural transformations fit well the axioms of 
   morphisms of $\Mnd$ (And it is similar to the construction in \cite{k}).
   The functoriality of the building a monad from a $\mathbb{T}$-Category implied that we can build
   the corresponding coglobular complex of $\Mnd$ 
   
   \[\xymatrix{(\omega-\mathbb{G}r, \underline{B}^{0})
    \ar[r]<+2pt>^(.45){({i_{1}}_{\ast}, \delta^{0}_{1})}
    \ar[r]<-2pt>_(.45){({i_{2}}_{\ast}, \kappa^{0}_{1})}& 
    (\omega-\mathbb{G}r/1\cup 2,\underline{B}^{1})
   \ar@{.>}[r]<+2pt>^{}\ar@{.>}[r]<-2pt>_{} &
   (\omega-\mathbb{G}r/1\cup 2,\underline{B}^{n}) 
   \ar[rr]<+2pt>^(.75){(1_{\omega-\mathbb{G}r/1\cup 2}, \delta^{n}_{n+1})}
	   \ar[rr]<-2pt>_(.75){(1_{\omega-\mathbb{G}r/1\cup 2}, \kappa^{n}_{n+1})}&&\ar@{.}[r]&}\]

   If $\xymatrix{\Adj\ar[r]^{\mathbb{P}}&\Cat}$ is the projection functor,  then the 
   functor
     \[\xymatrix{\Mnd\ar[r]^{\mathbb{K}}&\Adj\ar[r]^{\mathbb{P}}&\Cat}\]     
     
     brings to light  the following coglobular complex of Kleisli of the $n$-Transformations ($n\in {\mathbb{N}^*}$)  
     
     \[\xymatrix{Kl(\underline{B}^{0})\ar[rr]<+2pt>^{\delta^{0}_{1}}\ar[rr]<-2pt>_{\kappa^{0}_{1}}
  &&Kl(\underline{B}^{1})\ar[rr]<+2pt>^{\delta^{1}_{2}}\ar[rr]<-2pt>_{\kappa^{1}_{2}}
  &&Kl(\underline{B}^{2})\ar@{.>}[r]<+2pt>^{}\ar@{.>}[r]<-2pt>_{}
  &Kl(\underline{B}^{n-1})\ar[rr]<+2pt>^{\delta^{n-1}_{n}}\ar[rr]<-2pt>_{\kappa^{n-1}_{n}}
  &&Kl(\underline{B}^{n})\ar@{.}[r]<+2pt>\ar@{.}[r]<-2pt>&}\]   
    
 \section{The Category $\Mnd\Ar$ of Monads with Arities.} 
  \label{MndAr}
  
  This third paragraph recall the basic tools of the Nerve theory as developped by Mark Weber in \cite{web}. The
 only originality of this paragraph is to define properly two categories, $\Ar\Mnd$ and    
 $\Mnd\Ar$, respectively the category of Categories with Arities equipped with Monads and 
  the category of Monads with Arities. We hope that the explicit description of these categories make easier the construction of the Coglobular Complex of the Theories of the $n$-Transformations of the fourth paragraph.
 But overall we point out the case of the Monads Parametric Right Adjoints (Monads p.r.a, for shorter) which are the most important example of monads with arities. Actually we see in \cref{thMonadeNSCpra} that monads of the $n$-Transformations are indeed p.r.a, thus they allow us to show quickly that the $n$-Transformations are models for their corresponding theories.

    \subsection{The Category $\Ar$ of Categories with Arities.}  
   \label{Ar}
     
  The definition of the category $\Ar$ of Categories with Arities can also be found in \cite{mel}.
  
  \begin{description}
  \item[] Its objects are triple $(\Theta_{0}, i_{0}, \mathcal{A})$ where 
  $\xymatrix{\Theta_{0}\ar[r]^{ i_{0}}&\mathcal{A}}$ is a functor such that 
  
  \begin{itemize}
  \item $\mathcal{A}$ is cocomplet
  \item $i_{0}$ is fully faithfull.
  \item $\forall a\in{\mathcal{A}(0)}$, the presheaf 
  $hom_{\mathcal{A}}(i_{0}(-),a)$ which lives in $\widehat{\Theta_{0}}$ is fully-faithfull.
\end{itemize}

 This last condition is expressed by saying that $i_{0}$ is dense.
  
  \item[ ] Morphisms of $\Ar$ 
  $\xymatrix{(\Theta_{0}, i_{0}, \mathcal{A})\ar[r]^{(F,l)}&(\Theta_{1}, i_{1}, \mathcal{B})}$ are given by two functors
  $F$ and $l$ such that 
  
  \begin{itemize}
  \item  The following diagram is commutative
     \[\xymatrix{\Theta_{1} \ar[r]^{ i_{1}}&\mathcal{B}\\
    \Theta_{0} \ar[u]^{l}\ar[r]_{ i_{1}}&\mathcal{A}\ar[u]_{F}}\]
    
  \item They satisfy the Beck-Chevalley condition which means that if we note
  $\xymatrix{1_{\mathcal{A}}\ar[r]^{\eta^{0}}&{i_{o}}_{\ast} {i_{0}}^{\ast}}$ 
 and   $\xymatrix{i_{1}^{\ast} {i_{1}}_{\ast}\ar[r]^{\epsilon^{1}}&1_{\widehat{\mathcal{B}}}}$ respectivly
 the unit and the counit of the adjunctions $ i_{0}^{\ast}\dashv {i_{o}}_{\ast}$ and $ {i_{1}}^{\ast}\dashv {i_{1}}_{\ast}$ which result from the commutative square of presheaves categories
    
       \[\xymatrix{\widehat{\Theta_{1}} \ar[d]_{l^{\ast}}&\widehat{\mathcal{B}}
          \ar[l]_{ i_{1}^{\ast}}\ar[d]^{F^{\ast}}\\
              \widehat{\Theta_{0}}&\widehat{\mathcal{A}}\ar[l]^{{i_{0}}^{\ast}}}\]
 then the mate 
 
  \[\xymatrix{F^{\ast} {i_{1}}_{\ast}\ar[rrr]^
  {\Psi=({i_{0}}_{\ast}l^{\ast}\epsilon^{1})\circ^{2}_{1}(\eta^{0}F^{\ast} {i_{1}}_{\ast})}&&&{i_{0}}_{\ast}l^{\ast}}\] 
 
 associated to the natural identity 
 $\xymatrix{i_{0}^{\ast}F^{\ast}\ar[r]^{1}&l^{\ast}{i_{1}}^{\ast}}$, need to be a natural isomorphism.
  
  \end{itemize}
 
\end{description}
  
  \begin{remark}
  R.Street and M.Kelly showed in \cite{kelstr}  that building the mate of a natural transformation is functorial. Thus morphisms compose easily.
  
   \end{remark}

    \subsection{The Category $\Ar\Mnd$ of Categories with Arities equipped with Monads.} 
   \label{ArMnd}
   
     Lets note $\Ar\Mnd$ the following category
  \begin{enumerate}
  \item An object is given by $((\Theta_{0}, i_{0}, \mathcal{A}), (T, \eta, \mu))$ where $(\Theta_{0}, i_{0}, \mathcal{A})$
  is an object of $\Ar$  and $( \mathcal{A},(T, \eta, \mu))$ is a monad.  
  \item An arrow of $\Ar\Mnd$  
      \[\xymatrix{((\Theta_{0}, i_{0}, \mathcal{A}), (T, \eta, \mu))
      \ar[r]^{(Q,q,l)}&((\Theta_{1}, i_{1}, \mathcal{B}), (T', \eta', \mu'))}\]
      is given by an arrow of $\Mnd$ 
       \[\xymatrix{( \mathcal{A},(T, \eta, \mu))\ar[r]^{(Q,q)}&(\mathcal{B}, (T', \eta', \mu'))}\] 
       and an arrow of $\Ar$
          \[\xymatrix{(\Theta_{0}, i_{0}, \mathcal{A})\ar[r]^{(Q,q)}&(\Theta_{1}, i_{1}, \mathcal{B})}\]  
\end{enumerate}  

 For an object  $((\Theta_{0}, i_{0}, \mathcal{A}), (T, \eta, \mu))$ in $\Ar\Mnd$ 
  we can build the following four tools
  
  \begin{itemize}
  
    \item The functor $\xymatrix{\mathcal{A}\ar[rr]^{hom_{\mathcal{A}}(i_{0}(-),-)}&&\widehat{\Theta_{0}}}$ which gives the
    presheaf $hom_{\mathcal{A}}(i_{0}(-),a)$ for all $a\in \mathcal{A}$  
    \item The theory $\Theta_{T}$
     \label{Theory}    
    \[\xymatrix{\Theta_{0}\ar[rd]_{j}\ar[r]^{i_{0}}&\mathcal{A}\ar[r]^(.47){L^{T}}&T-\mathbb{A}lg\\
   &\Theta_{T}\ar[ru]_{i}}\]    
    
  that we obtain with the factorisation of the functor $L^{T}i_{0}$ by  
  a functor $j$ which is an identity on the objects and a fully-faithfull functor $i$. 
  We will see later \cref{TheorieNSC} that for
  the specific case of the $n$-Transformations, $\Theta_{T}$ can be defined in a elegant way
    as a full subcategory of the Kleisli category $Kl(T)$   
   
   \item The Nerve $N_{T}$
    \label{Nerve}   
   It is the functor 
    \[\xymatrix{T-\mathbb{A}lg\ar[rrrr]^{N_{T}:=hom_{T-\mathbb{A}lg}(i(-),-)}&&&&\widehat{\Theta_{T}}}\]
    
  The classical nerve and the nerve of Clemens Berger \cite{ber} are special case of this construction
   
    \item The restriction functor $res_{j}$
    
    It is the functor     
    \[\xymatrix{\widehat{\Theta_{T}}\ar[rrrr]^{res_{j}:=hom_{\widehat{\Theta_{T}}}(Y\circ j(-),-)}&&&&\widehat{\Theta_{0}}}\]
    
   where $Y$ is the Yoneda embeded $\xymatrix{\Theta_{T}\ar[r]^{Y}&\widehat{\Theta_{T}}}$

  This functor allow us to have a good notion of Generalised Segal Condition.    
    
  \end{itemize}
  
  \begin{definition}[Generalised Segal Condition]
   \label{Segal}  
  Given an object $((\Theta_{0}, i_{0}, \mathcal{A}), (T, \eta, \mu))$ in $\Ar\Mnd$.
   A presheaf $Z\in {\widehat{\Theta_{T}}}$ satisfy the Segal Condition if $res_{j}(Z)$ belongs to the image of 
    $hom_{\mathcal{A}}(i_{0}(-),-)$. 
 \end{definition} 
  
 \subsection{The Category $\Mnd\Ar$ of Monads with Arities and the Nerves Theorem.} 
  \label{MndArNerve} 
 $\Mnd\Ar$ is a full subcategory of  $\Ar\Mnd$ such that each of its 
 object $((\Theta_{0}, i_{0}, \mathcal{A}), (T, \eta, \mu))$ has in more the two following properties
 
 \begin{itemize}
  \item $T=Lan_{i_{o}}(T\circ i_{o})$
  \item $hom_{\mathcal{A}}(i_{o}(-), Lan_{i_{o}}(T\circ i_{o})(-))=
  Lan_{i_{o}}(hom_{\mathcal{A}}(i_{o}(-),T\circ i_{o})(-))$
\end{itemize} 

\begin{theorem}[Nerve Theorem]
 \label{NerveTheorem}
 If $((\Theta_{0}, i_{0}, \mathcal{A}), (T, \eta, \mu))$ belong to $\Mnd\Ar$ then 
 \begin{itemize}
  \item $N_{T}$ is fully faithfull
  \item 
  
  \begin{minipage}[c]{0.4\linewidth}
       $Z\in \widehat{\Theta_{T}}$ satisfy the Segal Condition 
           \end{minipage}
    \quad$\Longleftrightarrow$\quad
    \begin{minipage}[c]{0.4\linewidth}
      There is a $(G,v)\in T-\mathbb{A}lg$ such that $N_{T}(G,v)=Z$
          \end{minipage}  
   \end{itemize}
 \end{theorem}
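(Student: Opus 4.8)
The plan is to package both conclusions into a single structural statement and then read them off formally. Write $\nu := hom_{\mathcal{A}}(i_0(-),-)\colon \mathcal{A}\to\widehat{\Theta_0}$ for the arity-nerve, and let $L^{T}\dashv U^{T}$ be the free/forgetful adjunction for $T\text{-}\mathbb{A}lg$. The heart of the argument is to show that the square
\[
\xymatrix{
T\text{-}\mathbb{A}lg \ar[r]^{N_{T}} \ar[d]_{U^{T}} & \widehat{\Theta_{T}} \ar[d]^{res_{j}} \\
\mathcal{A} \ar[r]_{\nu} & \widehat{\Theta_{0}}
}
\]
is a pullback of categories (up to isomorphism, i.e. an iso-comma/pseudo-pullback) in which $\nu$ is fully faithful. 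Fully faithfulness of $\nu$ is nothing but density of $i_0$, which is part of $(\Theta_0,i_0,\mathcal{A})$ being an object of $\Ar$. Once the square is known to be such a pullback, both bullets of the theorem follow: the projection opposite a fully faithful leg of a pseudo-pullback is again fully faithful (so $N_T$ is fully faithful), and a presheaf lies in the essential image of $N_T$ exactly when it can be completed to an object of the pullback, i.e. exactly when $res_j(Z)\cong\nu(a)$ for some $a$, which is the Generalised Segal Condition of \cref{Segal}.

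First I would record that the square commutes. For a $T$-algebra $(G,v)$ and $\theta\in\Theta_{0}$, the factorisation $i\circ j = L^{T}\circ i_{0}$ together with the adjunction $L^{T}\dashv U^{T}$ gives
\[
(res_{j}N_{T}(G,v))(\theta) = hom_{T\text{-}\mathbb{A}lg}(ij(\theta),(G,v)) = hom_{T\text{-}\mathbb{A}lg}(L^{T}i_{0}(\theta),(G,v)) \cong hom_{\mathcal{A}}(i_{0}(\theta),U^{T}(G,v)) = (\nu U^{T}(G,v))(\theta),
\]
naturally, so $res_{j}\circ N_{T}\cong\nu\circ U^{T}$. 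Faithfulness of $N_{T}$ is then immediate: if $N_{T}(f)=N_{T}(g)$ then $\nu U^{T}(f)=\nu U^{T}(g)$, whence $U^{T}(f)=U^{T}(g)$ because $\nu$ is faithful, and $f=g$ because $U^{T}$ (being monadic) is faithful. The same commutation gives at once the easy half of the image characterisation: $res_{j}N_{T}(G,v)=\nu U^{T}(G,v)$ is in the image of $\nu$, so every $N_{T}(G,v)$ satisfies the Segal condition.

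The real work is to produce a quasi-inverse to the comparison functor $T\text{-}\mathbb{A}lg \to \widehat{\Theta_{T}}\times_{\widehat{\Theta_{0}}}\mathcal{A}$. An object of the pseudo-pullback is a triple $(Z,a,\alpha)$ with $Z\in\widehat{\Theta_{T}}$, $a\in\mathcal{A}$ and $\alpha\colon res_{j}(Z)\xrightarrow{\sim}\nu(a)$. Since $j$ is the identity on objects, $\Theta_{T}$ and $\Theta_{0}$ have the same objects, and the \emph{new} morphisms of $\Theta_{T}$ are precisely the images under $L^{T}$ of the algebra operations; evaluating $Z$ on these morphisms and transporting along $\alpha$ reads off a candidate action $Ta\to a$. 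This is exactly where the two defining conditions of $\Mnd\Ar$ are indispensable: the condition $T = Lan_{i_{0}}(T\circ i_{0})$ exhibits $Ta$ as the canonical $i_{0}$-colimit of free pieces, so the action is determined componentwise and is well defined independently of presentation, while the condition $hom_{\mathcal{A}}(i_{0}(-),Lan_{i_{0}}(T\circ i_{0})(-)) = Lan_{i_{0}}(hom_{\mathcal{A}}(i_{0}(-),T\circ i_{0})(-))$ asserts that $\nu T$ preserves these same canonical colimits, which is what forces the componentwise data to glue into a single $\mathcal{A}$-morphism satisfying the unit and associativity axioms. The outcome is a genuine $T$-algebra $(a,\xi)$, and re-running the construction of $N_{T}$ on $(a,\xi)$ returns $Z$ up to the given isomorphism $\alpha$, yielding the inverse equivalence.

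I expect the reconstruction step in the third paragraph to be the main obstacle: checking that the action read off from $Z$ does not depend on the chosen $i_0$-presentation of $Ta$ and genuinely satisfies the monad axioms is the one place where honest work, rather than formal adjunction and Yoneda bookkeeping, is required, and it is made possible \emph{only} by the two arity conditions together with density of $i_{0}$. Everything else — commutativity of the square, faithfulness, the easy half of the Segal characterisation, and the passage from ``pullback plus $\nu$ fully faithful'' to the two stated conclusions — is formal.
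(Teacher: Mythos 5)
The paper does not actually prove this theorem: it is recalled verbatim from Weber's work (\cite{web}), with no argument supplied, so there is no in-paper proof to compare you against. Measured against the proof in the literature the paper is citing, your strategy is the right one --- indeed it is exactly the modern formulation (due to Weber, and later Berger--Melli\`es--Weber), in which the Nerve Theorem \emph{is} the statement that the square
$res_j\circ N_T\cong \nu\circ U^T$ exhibits $T\text{-}\mathbb{A}lg$ as the pseudo-pullback of $res_j$ along the fully faithful $\nu$. Your formal reductions are all sound: commutativity of the square is the adjunction $L^T\dashv U^T$ plus $ij=L^Ti_0$; full fidelity of $\nu$ is precisely density of $i_0$ (the paper's phrase ``the presheaf $hom_{\mathcal{A}}(i_0(-),a)$ is fully-faithfull'' is garbled, but you read it correctly); and both bullets do follow formally from the pullback property, using in addition that $res_j$ is faithful because $j$ is bijective on objects.

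The one place where your proposal is a plan rather than a proof is the one you yourself flag: showing the comparison $T\text{-}\mathbb{A}lg\to\widehat{\Theta_T}\times_{\widehat{\Theta_0}}\mathcal{A}$ is an equivalence. Two points there deserve to be made explicit. First, a morphism of $\Theta_T$ from $\theta$ to $\theta'$ is a general Kleisli map $i_0\theta\to Ti_0\theta'$, not merely an ``image under $L^T$''; it is the coend formula $\mathcal{A}(i_0\theta'',Ta)\cong\int^{\theta}\mathcal{A}(i_0\theta'',Ti_0\theta)\times\mathcal{A}(i_0\theta,a)$ supplied by the two arity axioms that lets you transport $Z$'s action on such maps into a map $\nu(Ta)\to\nu(a)$ and hence, by density, into $\xi\colon Ta\to a$; the unit and associativity axioms are then checked after applying the (fully faithful) $\nu$ and restricting to the canonical generators. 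Second, you establish essential surjectivity of the comparison but not its fullness: given algebras $X,Y$ and $\phi\colon N_TX\to N_TY$, you must still show that the unique $f$ with $\nu f$ corresponding to $res_j\phi$ is an algebra morphism --- this uses the same coend decomposition of $Ta$ and is not covered by your faithfulness argument. Neither point threatens the approach; both are exactly the ``honest work'' you located correctly, and both are carried out in \cite{web}.
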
  
 
 In the following we will note $\Mod(\Theta_{T})$ the essential image of $N_{T}$.
 
 \subsection{The fundamental example of Monads with Arities: The Monads Parametric Right Adjoints (Monads p.r.a).}  
  \label{FundamentalExample} 
 Given $\mathcal{A}$ a category with a final object $1$, and a functor 
 $\xymatrix{\mathcal{A}\ar[r]^{F}&\mathcal{B}}$
 
 We have the following factorisation:
 
    \[\xymatrix{\mathcal{A}\ar[rd]_{F_{1}}\ar[rr]^{F}&&\mathcal{B}\\
    &\mathcal{B}/F(1)\ar[ru]_{cod}}\] 
    
  where $F_{1}(a):=F(!_{a})$. In that case we have the following important definition
 \begin{definition}[Street 2001]
  The last $F$ is qualified as Parametric Right Adjoints (p.r.a) if $F_{1}$ has a left adjoint.
\end{definition}
\begin{definition}
A monad $(\mathcal{G},(T,\eta,\mu))$ is p.r.a if $T$ is p.r.a. and if its unit and multiplication are cartesian.
 \end{definition}  
 
 \begin{theorem}[Weber, \cite{web}]
  \label{TheoremPRA}
 Given $(\Theta_{0}, i_{0}, \mathcal{A})\in \Ar$. If a monad $(\mathcal{A},(T,\eta,\mu))$ is p.r.a
then $((\Theta_{0}, i_{0}, \mathcal{A}), (T,\eta,\mu))\in \Mnd\Ar$
\end{theorem}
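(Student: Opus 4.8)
The plan is to verify, for the pair $((\Theta_{0}, i_{0}, \mathcal{A}), (T,\eta,\mu))$, the two conditions that distinguish $\Mnd\Ar$ inside $\Ar\Mnd$: that $T = Lan_{i_{0}}(T\circ i_{0})$ and that the nerve functor $\nu := hom_{\mathcal{A}}(i_{0}(-),-)$ is compatible with this Kan extension, i.e. $\nu T = Lan_{i_{0}}(\nu\,T\, i_{0})$. The engine of the whole argument is the structure theory of p.r.a. functors. Writing the p.r.a. factorisation $T = cod\circ T_{1}$ with $T_{1}\colon \mathcal{A}\to \mathcal{A}/T(1)$ a right adjoint, one obtains Weber's \emph{generic factorisations}: every arrow $f\colon x\to T(a)$ factors, essentially uniquely, as $x\xrightarrow{g}T(E)\xrightarrow{T(h)}T(a)$ with $g$ a $T$-generic map. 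In fact the existence of such factorisations is equivalent to $T$ being p.r.a., so this is exactly the tool the hypothesis hands us.

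I would first treat $T = Lan_{i_{0}}(T\circ i_{0})$. Since $i_{0}$ is dense, each $a\in \mathcal{A}$ is its own canonical colimit $a = \colim_{(i_{0}\downarrow a)} i_{0}$, and the pointwise left Kan extension reads $Lan_{i_{0}}(T i_{0})(a) = \colim_{(i_{0}\downarrow a)} T(i_{0}(-))$. Hence the condition is precisely that $T$ carries the density colimit of $a$ to a colimit, i.e. that $T$ preserves $\Theta_{0}$-canonical colimits. I would establish this by feeding generic factorisations into the comma category $(i_{0}\downarrow a)$: classifying a map $i_{0}(\theta)\to T(a)$ by its generic part $i_{0}(\theta)\to T(E)$ together with the residual $E\to a$ reindexes the diagram and identifies the colimit computing $Lan_{i_{0}}(T i_{0})(a)$ with $T(a)$ itself.

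Next I would verify $\nu T = Lan_{i_{0}}(\nu\,T\,i_{0})$. The key computation is $\nu T(a)(\theta) = hom_{\mathcal{A}}(i_{0}(\theta), T(a))$, for which generic factorisation supplies a coproduct decomposition indexed by the $T$-generic maps out of $i_{0}(\theta)$, each summand having the form $hom_{\mathcal{A}}(E,a)$. This familial description is exactly what exhibits $\nu T$ as a pointwise left Kan extension along $i_{0}$. Here the cartesianness of $\eta$ and $\mu$ does the real work: it guarantees that the generic data are stable under the unit and the multiplication and that the decomposition is natural in both $\theta$ and $a$, so that the local coproducts-of-representables assemble into the required Kan extension and cohere with the monad structure. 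Combined with the first condition this places the object in $\Mnd\Ar$, as required.

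The step I expect to be the main obstacle is the second condition. Whereas preservation of density colimits by $T$ follows fairly directly from genericity together with density, showing that the \emph{nerve} $\nu T$ is again a pointwise Kan extension demands controlling the generic objects $E$ that occur in the factorisations of maps $i_{0}(\theta)\to T(a)$ — one must check they are accounted for by $\Theta_{0}$ under the density presentation and that the resulting coproduct decomposition of hom-sets is genuinely functorial. This is precisely the place where the cartesian nature of the unit and multiplication, rather than bare p.r.a.-ness of the underlying endofunctor, is indispensable.
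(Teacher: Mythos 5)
The paper does not actually prove this statement: it is imported verbatim from Weber's \emph{Familial 2-functors and parametric right adjoints} and used as a black box, so there is no in-paper argument to set yours against. Your choice of machinery --- generic factorisations, which characterise p.r.a.\ functors and yield the familial decomposition $hom_{\mathcal{A}}(i_{0}(\theta),T(a))\cong\coprod hom_{\mathcal{A}}(E,a)$ --- is the machinery Weber's proof runs on, so the strategy is the right one.

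However, there is a genuine gap, and it bites earlier than where you expect it: already in the first condition $T=Lan_{i_{0}}(T\circ i_{0})$. Genericity plus density of $i_{0}$ do \emph{not} suffice. The reindexing you describe (classify $i_{0}(\theta)\to T(a)$ by its generic part $i_{0}(\theta)\to T(E)$ and residual $E\to a$) identifies $Lan_{i_{0}}(Ti_{0})(a)$ with $T(a)$ only if the middle objects $E$ of the generic factorisations are themselves controlled by $\Theta_{0}$, i.e.\ only if $\Theta_{0}$ contains (or is suitably closed under) the $T$-generic objects. That is an \emph{additional hypothesis} on the arities, not a consequence of $(\Theta_{0},i_{0},\mathcal{A})\in\Ar$. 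Concretely: take $\mathcal{A}=\mathbf{Set}$, take $\Theta_{0}$ the full subcategory on a singleton $1$ (which is dense), and take $T$ the free-monoid monad (which is p.r.a.\ with cartesian unit and multiplication). Then $(i_{0}\downarrow a)$ is the discrete category of elements of $a$, so $Lan_{i_{0}}(Ti_{0})(a)\cong a\times\mathbb{N}$, and its canonical comparison map into $T(a)=\coprod_{n}a^{n}$ hits only the constant words; the generic middle objects here are the finite sets $n$, which are absent from $\Theta_{0}$. So the statement as you (and, to be fair, the paper) have written it is false without a compatibility condition between the arities and the monad. In the paper's application the condition does hold, because the Batanin trees are exactly the generic objects of the relevant $\omega$-categorical monads; but your proof must assume and invoke that closure property explicitly rather than attempt to derive it from density, and the same point then also discharges the obstacle you flag for the second (nerve) condition.
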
   
The two following propositions will be useful even if they are easy
 \begin{proposition}[Weber, \cite{web}]
  \label{PropoCart}
  Given a cartesian transformation $\xymatrix{F\ar[r]^{\gamma}&G}$ between two cartesian functors $F$ and $G$.
If $G$ is p.r.a the $F$ is p.r.a as well.
\end{proposition}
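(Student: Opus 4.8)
The plan is to use the cartesian-ness of $\gamma$ to exhibit the factorisation functor $F_{1}\colon \mathcal{A}\to \mathcal{B}/F(1)$ as a composite of two functors, each of which admits a left adjoint; a left adjoint to $F_{1}$ then follows formally by composing the adjunctions, which is exactly what it means for $F$ to be p.r.a. Throughout, write $\gamma_{1}\colon F(1)\to G(1)$ for the component of $\gamma$ at the terminal object $1$ of $\mathcal{A}$, and recall that (since $\mathcal{B}$ has the relevant pullbacks) the change-of-base functor $\gamma_{1}^{*}\colon \mathcal{B}/G(1)\to \mathcal{B}/F(1)$ is defined and, once defined, always carries a left adjoint, namely post-composition $\Sigma_{\gamma_{1}}=\gamma_{1}\circ(-)$, so that $\Sigma_{\gamma_{1}}\dashv \gamma_{1}^{*}$.

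The crux is the natural isomorphism $F_{1}\cong \gamma_{1}^{*}\circ G_{1}$. On objects, for $a\in\mathcal{A}$ consider the naturality square of $\gamma$ at the unique map $!_{a}\colon a\to 1$:
\[\xymatrix{F(a)\ar[r]^{\gamma_{a}}\ar[d]_{F(!_{a})}&G(a)\ar[d]^{G(!_{a})}\\ F(1)\ar[r]_{\gamma_{1}}&G(1)}\]
Because $\gamma$ is cartesian this square is a pullback, and since $G_{1}(a)=G(!_{a})$ the object $\gamma_{1}^{*}(G_{1}(a))$ is computed precisely by this pullback; hence $\gamma_{1}^{*}(G_{1}(a))\cong F(!_{a})=F_{1}(a)$ in $\mathcal{B}/F(1)$, with comparison map $\gamma_{a}$. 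To upgrade this to a natural isomorphism I would check functoriality on a morphism $f\colon a\to a'$: naturality of $\gamma$ gives $\gamma_{a'}\circ F(f)=G(f)\circ \gamma_{a}$, which says exactly that $F(f)$ is the arrow induced between the two pullbacks by the pair $(G(f),\mathrm{id}_{F(1)})$, i.e.\ that $F_{1}(f)$ and $\gamma_{1}^{*}(G_{1}(f))$ agree under the identification.

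Granting this, the conclusion is immediate: $G$ is p.r.a, so $G_{1}$ has a left adjoint $L^{G}$, and $\gamma_{1}^{*}$ has the left adjoint $\Sigma_{\gamma_{1}}$; therefore $F_{1}\cong \gamma_{1}^{*}\circ G_{1}$ has the left adjoint $L^{G}\circ \Sigma_{\gamma_{1}}$, and so $F$ is p.r.a as claimed. The main obstacle is not the formal composition of adjunctions but the verification that the pullback identification $F_{1}\cong \gamma_{1}^{*}\circ G_{1}$ is natural in $a$; everything else reduces to the single observation that a cartesian transformation turns the naturality squares over $1$ into exactly the pullbacks computing $\gamma_{1}^{*}$.
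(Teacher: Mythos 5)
The paper gives no proof of this proposition at all --- it is quoted from Weber's article --- so there is nothing internal to compare against; your argument is correct and is essentially the standard one. The identification $F_{1}\cong\gamma_{1}^{*}\circ G_{1}$, justified by the observation that cartesianness of $\gamma$ makes the naturality square at $!_{a}$ exactly the pullback computing $\gamma_{1}^{*}(G(!_{a}))$, combined with $L^{G}\dashv G_{1}$ and $\Sigma_{\gamma_{1}}\dashv\gamma_{1}^{*}$, does produce the left adjoint $L^{G}\circ\Sigma_{\gamma_{1}}$ of $F_{1}$, and your naturality check for $F_{1}(f)$ versus the induced map of pullbacks is the right (and only nontrivial) verification. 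The one implicit assumption is that $\mathcal{B}$ has pullbacks along $\gamma_{1}$, so that $\gamma_{1}^{*}$ is defined on all of $\mathcal{B}/G(1)$; if you want to avoid it, note that the same pullback squares give directly a bijection $\mathcal{B}/F(1)\bigl(f,F_{1}(a)\bigr)\cong\mathcal{B}/G(1)\bigl(\gamma_{1}\circ f,G_{1}(a)\bigr)$ natural in $a$, so representability of the left-hand side follows from $L^{G}$ without ever forming $\gamma_{1}^{*}$ --- a cosmetic point here, since the categories to which the paper applies the proposition (slices of the category of $\omega$-graphs) have all pullbacks.
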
  
 \begin{proposition}
  \label{PropoDomain}
 Given a category $\mathcal{A}$ and an object $A\in \mathcal{A}$. Then the functor domain
               \[\xymatrix{\mathcal{A}/A\ar[r]^{U_{A}}&\mathcal{A}}\]
  is p.r.a
\end{proposition}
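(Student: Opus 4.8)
\section*{Proof proposal}

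The plan is to apply the definition of a parametric right adjoint directly, with $F = U_A$, and to show that the associated comparison functor $(U_A)_1$ is nothing but the identity functor, which trivially admits a left adjoint. Recall that, to form $(U_A)_1$, one only needs the source category $\mathcal{A}/A$ to possess a terminal object; so the first thing I would record is that $\mathcal{A}/A$ always has one, namely $1 = (A \xrightarrow{\mathrm{id}_A} A)$, regardless of whether $\mathcal{A}$ itself has a terminal object. Indeed, for any object $(X \xrightarrow{f} A)$ the unique morphism to $1$ is $f$ itself, viewed as an arrow of $\mathcal{A}/A$.

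Next I would compute $U_A(1) = U_A(A \xrightarrow{\mathrm{id}_A} A) = A$, since the domain functor sends $\mathrm{id}_A$ to its source $A$. The crucial consequence is that the slice $\mathcal{A}/U_A(1)$ occurring in the factorisation of $U_A$ coincides \emph{on the nose} with the source category $\mathcal{A}/A$ itself. This identification is what makes the comparison functor so simple.

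I would then evaluate $(U_A)_1 : \mathcal{A}/A \to \mathcal{A}/U_A(1) = \mathcal{A}/A$. By definition its value on an object $(X \xrightarrow{f} A)$ is $U_A(!_{(X,f)})$, where $!_{(X,f)}$ is the unique map to the terminal object computed above. Since $!_{(X,f)}$ has underlying $\mathcal{A}$-arrow $f$, and $U_A$ sends a morphism of the slice to its underlying arrow, we obtain $U_A(!_{(X,f)}) = f$, which is precisely the object $(X \xrightarrow{f} A)$ read back as an object of $\mathcal{A}/A$. The same bookkeeping on morphisms gives $(U_A)_1(h) = U_A(h) = h$. Hence $(U_A)_1 = \mathrm{Id}_{\mathcal{A}/A}$.

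Finally, the identity functor is its own left adjoint, so $(U_A)_1$ has a left adjoint and $U_A$ is p.r.a by definition. I expect no genuine obstacle in this argument; the only point requiring care is the middle step, namely correctly identifying the unique map to the terminal object of $\mathcal{A}/A$ and checking that applying $U_A$ to it returns the original slice object, so that the comparison functor comes out to be literally the identity rather than merely isomorphic to it.
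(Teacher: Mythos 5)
Your argument is correct and complete: the slice $\mathcal{A}/A$ has terminal object $\mathrm{id}_A$, $U_A(\mathrm{id}_A)=A$, and the comparison functor $(U_A)_1:\mathcal{A}/A\to\mathcal{A}/U_A(1)=\mathcal{A}/A$ is literally the identity, which has a (trivial) left adjoint. The paper states this proposition without proof (it is flagged as ``easy''), and your computation is exactly the intended justification, with the right care taken at the one delicate point, namely identifying $!_{(X,f)}$ with $f$ itself.
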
   
   
  \section{Coglobular Complex of the Theories of the $n$-Transformations($n\in {\mathbb{N}^*}$).}
    \label{CogloTheorie}
    
    This fourth paragraph is an application of the last paragraph for the $n$-Transformations. In particular we built the 
 Coglobular Complex of the Theories of the $n$-Transformations. First of all we exhibit Categories of Arities for the $n$-Transformations where we can immediately see their colored nature. Then we construct  the Theories of the $n$-Transformations where in particular we can see their bicolored features and then we describe these colored theories as full 
 subcategories of their Kleisli categories. Finally we exhibit the Coglobular Complex of the Theories of the $n$-Transformations. 
  \subsection{Categories of Arities for the $n$-Transformations.} 
     \label{ArNSC}
   Given $\Theta_{0}$ the graphic trees category of [Batanin, Berger, Joyal]. We have the following proposition
  \begin{proposition}
   \label{PropoArNSC}  
  For all $n\in \mathbb{N}^{\ast}$ the following canonical inclusion functors 
  \[\xymatrix{\Theta_{0}\sqcup ... \sqcup \Theta_{0} 
  \ar@{^{(}->}[r]^(.4){i_{0}}&\omega-\mathbb{G}r/1\cup 2 \cup ... \cup n} \]
  
 produce categories with arities. 
 \end{proposition}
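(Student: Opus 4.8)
The plan is to verify the three defining conditions of $\Ar$ (see \cref{Ar}) for the triple $(\Theta_0^{\sqcup n}, i_0, \omega\text{-}\mathbb{G}r/C_n)$, where I abbreviate $\Theta_0^{\sqcup n} := \Theta_0 \sqcup \cdots \sqcup \Theta_0$ ($n$ copies) and $C_n := 1\cup 2\cup\cdots\cup n$. Cocompleteness is immediate: $\omega\text{-}\mathbb{G}r$ is a presheaf topos, hence cocomplete, and the forgetful functor out of a slice creates colimits, so $\omega\text{-}\mathbb{G}r/C_n$ is cocomplete. For the two remaining conditions --- full faithfulness and density of $i_0$ --- the strategy is to reduce everything to the single-colour case, namely the known fact (going back to \cite{ber} and \cite{web}) that the canonical inclusion $\iota\colon\Theta_0\hookrightarrow\omega\text{-}\mathbb{G}r$ of the graphic trees into globular sets exhibits $(\Theta_0,\iota,\omega\text{-}\mathbb{G}r)$ as a category with arities, i.e.\ $\iota$ is fully faithful and dense. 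Write $N_0\colon\omega\text{-}\mathbb{G}r\to\widehat{\Theta_0}$ for the associated globular nerve.

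The key structural observation is that the colour object $C_n=\coprod_{j=1}^n 1$ is a finite coproduct of copies of the terminal $\omega$-graph, and $\omega\text{-}\mathbb{G}r$, being a topos, is extensive. Hence coproducts are disjoint and universal, yielding a canonical equivalence
\[
\omega\text{-}\mathbb{G}r/C_n\;\simeq\;\prod_{j=1}^n \bigl(\omega\text{-}\mathbb{G}r/1\bigr)\;\simeq\;(\omega\text{-}\mathbb{G}r)^{n},
\]
which sends a coloured graph $(G,c\colon G\to C_n)$ to the tuple $(G_1,\dots,G_n)$ of its colour fibres $G_j:=c^{-1}(j)$; because $c$ is a map of $\omega$-graphs and source/target preserve colours in $C_n$, each $G_j$ is a genuine sub-$\omega$-graph and $G=\coprod_j G_j$. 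Dually, presheaves on a coproduct of categories split as a product, $\widehat{\Theta_0^{\sqcup n}}\simeq(\widehat{\Theta_0})^{\,n}$. Under these two identifications the functor $i_0$, whose restriction to the $j$-th copy of $\Theta_0$ is $(i_j)_{\ast}\circ\iota$, becomes the \emph{insertion} functor placing $\iota(T)$ in the $j$-th slot and the empty graph elsewhere, and a direct computation of hom-sets in the product category shows that the nerve attached to $i_0$ is exactly the $n$-fold product $N_0\times\cdots\times N_0$.

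With the decomposition in hand the two conditions are formal. For full faithfulness one checks that a morphism in $\omega\text{-}\mathbb{G}r/C_n$ between the images of $T$ (colour $j$) and $T'$ (colour $k$) must respect colours: for $j=k$ it is just a morphism $\iota(T)\to\iota(T')$ in $\omega\text{-}\mathbb{G}r$, so full faithfulness follows from that of $\iota$; for $j\neq k$ there is no such morphism, since the constant colourings $c_j,c_k$ cannot agree on a non-empty graph, matching the empty hom-set in the disjoint union $\Theta_0^{\sqcup n}$. For density, since the nerve is $N_0\times\cdots\times N_0$ and a product of fully faithful functors is fully faithful, density of $i_0$ reduces to density of $\iota$, which is exactly the single-colour input.

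The genuine content is therefore imported from the single-colour result, and I expect the only real obstacle to be the careful bookkeeping in the second step: verifying that the extensive decomposition of $\omega\text{-}\mathbb{G}r/C_n$ and of $\widehat{\Theta_0^{\sqcup n}}$ is compatible with $i_0$ and with the \emph{choice of colour} functors $(i_j)_{\ast}$, so that the nerve really does factor as a product of copies of $N_0$. Once this compatibility is pinned down, both full faithfulness and density are inherited componentwise from the $n=1$ case.
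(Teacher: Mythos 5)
Your proof is correct, but it takes a genuinely different route from the paper's. The paper argues directly: full faithfulness of $i_{0}$ is deduced from $\Theta_{0}$ being a full subcategory of $\omega\text{-}\mathbb{G}r$, and density is asserted in one stroke by writing down the bijection $\hom(G,G')\simeq\hom(\hom(i_{0}(-),G),\hom(i_{0}(-),G'))$ and invoking Yoneda --- a rather terse argument that leaves the actual content of density (which does \emph{not} hold for an arbitrary fully faithful functor) implicit, and that does not address cocompleteness at all. You instead exploit extensivity of the presheaf topos $\omega\text{-}\mathbb{G}r$ to split the slice $\omega\text{-}\mathbb{G}r/1\cup\cdots\cup n$ as an $n$-fold product of copies of $\omega\text{-}\mathbb{G}r$, match this with the splitting $\widehat{\Theta_{0}\sqcup\cdots\sqcup\Theta_{0}}\simeq(\widehat{\Theta_{0}})^{n}$, check that $i_{0}$ and its nerve decompose compatibly (the fibre computation $\hom_{/1\cup\cdots\cup n}((T,c_{j}),(G,c))\cong\hom(T,G_{j})$ is exactly the bookkeeping you flag, and it does go through since trees are nonempty, so cross-colour hom-sets vanish), and then import full faithfulness and density of $\Theta_{0}\hookrightarrow\omega\text{-}\mathbb{G}r$ from the single-colour case. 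What your approach buys is a proof that isolates where the real input lives (Berger's single-colour density) and makes the reduction mechanical; what the paper's approach buys is brevity, at the cost of leaving the reader to supply essentially the argument you wrote out. Both are valid; yours is the more complete of the two.
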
 
   
 \begin{proof}
 By definition of $\Theta_{0}$ which is a full subcategory of $\omega-\mathbb{G}r$ the inclusions
 $i_{0}$ are fully faithfull.
 
 Also for each $n\in{\mathbb{N}}$ we have the following diagram
 
     \[\xymatrix{
 *++{ \Theta_{0} \sqcup ... \sqcup \Theta_{0}}
 \ar@{>->}[d]_{Y}\ar@{^{(}->}[rr]^(.5){i_{0}}
  &&\omega-\mathbb{G}r/1\cup ... \cup n
 \ar[lld]^*!R(.75){\labelstyle\operatorname{hom}_{\omega-\mathbb{G}r/1\cup ... \cup n}(i_{0}(-),-)}
 \\
 \widehat{\Theta_{0} \sqcup ... \sqcup \Theta_{0}} 
 }\]
 
 and if $G, G' \in \omega-\mathbb{G}r/1\cup 2 \cup ... \cup n$, by 
 the lemma of Yoneda we get the following bijection
 
 \[hom_{\omega-\mathbb{G}r/1\cup 2 \cup ... \cup n}(G,G')
 \simeq \\
 hom_{\widehat{\Theta_{0} \sqcup\Theta_{0}\sqcup ... \sqcup \Theta_{0}}}
 (hom_{\omega-\mathbb{G}r/1\cup 2 \cup ... \cup n}(i_{0}(-),G),
 hom_{\omega-\mathbb{G}r/1\cup 2 \cup ... \cup n}(i_{0}(-),G'))\]
   \end{proof}
   
  For the $n$-Transformations the two following morphisms of $\Ar$ are
  important
  
   \[\xymatrix{\Theta_{0}\ar@{^{(}->}[d]_{i_{0}}\ar@< 2pt>[rr]^{{i_{1}}_{\ast}}
   \ar@< -2pt>[rr]_{{i_{2}}_{\ast}}&&
   \Theta_{0} \sqcup\Theta_{0}\ar@{^{(}->}[d]^{i_{0}}\\
  \omega-\mathbb{G}r
   \ar@< 2pt>[rr]^{{i_{1}}_{\ast}}
   \ar@< -2pt>[rr]_{{i_{2}}_{\ast}}&&\omega-\mathbb{G}r/1\cup 2 }\]
   
   where ${i_{1}}_{\ast}$ and ${i_{2}}_{\ast}$ are the functors "choice of a color" (See the \cref{CogloKleisli}).
   
\subsection{Theories of the $n$-Transformations($n\in {\mathbb{N}^*}$).}  
 \label{TheorieNSC}
Let us consider the case of the Categories with Arities equipped with Monads of the $n$-Transformations
$(( \Theta_{0},i_{0},\omega-\mathbb{G}r),(\underline{B^{0}}, \eta^{0}, \mu^{0}))$ and
$(( \Theta_{0} \sqcup\Theta_{0},i_{0},\omega-\mathbb{G}r/1\cup 2),(\underline{B^{i}}, \eta^{i}, \mu^{i}))$
 if $i\geqslant 1$
 
 We have the following factorisation
 
   \[\xymatrix{\Theta_{0}\ar[rd]_{j}\ar[r]^{i_{0}}&\omega-\mathbb{G}r\ar[r]^(.47){L^{0}}&\underline{B^{0}}-\mathbb{A}lg\\
   &\Theta_{\underline{B^{0}}}\ar[ru]_{i}}\] 
   
  and for each $i\geqslant 1$ we have the following factorisations
  
   \[\xymatrix{\Theta_{0} \sqcup\Theta_{0}\ar[rd]_{j}\ar[r]^(.4){i_{0}}&
   \omega-\mathbb{G}r/1\cup 2\ar[r]^(.58){L^{i}}&\underline{B^{i}}-\mathbb{A}lg\\
   &\Theta_{\underline{B^{i}}}\ar[ru]_{i}}\] 
   
   where the functors $j$ are identity on the objects and the functors $i$ are fully faithfull. 
   The categories $\Theta_{\underline{B^{0}}}$, $\Theta_{\underline{B^{1}}}$, ...,$\Theta_{\underline{B^{i}}}$, ...
   are the theories of the $n$-Transformations (by abuse we call $\Theta_{\underline{B^{0}}}$
   the theory of the $0$-Transformations, which is actually the theory built by Clemens Berger in \cite{ber}). We can also give them the following alternative definition: Each
   $\Theta_{\underline{B^{i}}}$ can be seen as the full subcategory of the Kleisli category 
   $Kl(\Theta_{\underline{B^{i}}})$ 
   (see the paragraph \cref{CogloKleisli}) 
   which objects are the bicolored trees if $i\geqslant 1$ (i.e belong
   in $\Theta_{0} \sqcup\Theta_{0}$), and which objects are the trees if $i=0$. With this description
   we obtain the Coglobular Complex of the theories of the $n$-Transformations which is seen as a subcomplex
   of the Coglobular Complex  of the Kleisli categories of the $n$-Transformations

    \[\xymatrix{\Theta_{\underline{B^{0}}}\ar@{^{(}->}[d]\ar[rr]<+2pt>^{\delta^{0}_{1}}\ar[rr]<-2pt>_{\kappa^{0}_{1}}
  &&\Theta_{\underline{B^{1}}}\ar@{^{(}->}[d]\ar[rr]<+2pt>^{\delta^{1}_{2}}\ar[rr]<-2pt>_{\kappa^{1}_{2}}
  &&\Theta_{\underline{B^{2}}}\ar@{^{(}->}[d]\ar@{.>}[r]<+2pt>^{}\ar@{.>}[r]<-2pt>_{}
  &\Theta_{\underline{B^{n-1}}}\ar@{^{(}->}[d]\ar[rr]<+2pt>^{\delta^{n-1}_{n}}\ar[rr]<-2pt>_{\kappa^{n-1}_{n}}
  &&\Theta_{\underline{B^{n}}}\ar@{^{(}->}[d]\ar@{.}[r]<+2pt>\ar@{.}[r]<-2pt>&\\
  Kl(\underline{B}^{0})\ar[rr]<+2pt>^{\delta^{0}_{1}}\ar[rr]<-2pt>_{\kappa^{0}_{1}}
  &&Kl(\underline{B}^{1})\ar[rr]<+2pt>^{\delta^{1}_{2}}\ar[rr]<-2pt>_{\kappa^{1}_{2}}
  &&Kl(\underline{B}^{2})\ar@{.>}[r]<+2pt>^{}\ar@{.>}[r]<-2pt>_{}
  &Kl(\underline{B}^{n-1})\ar[rr]<+2pt>^{\delta^{n-1}_{n}}\ar[rr]<-2pt>_{\kappa^{n-1}_{n}}
  &&Kl(\underline{B}^{n})\ar@{.}[r]<+2pt>\ar@{.}[r]<-2pt>&}\]  
   
  \section{Final Bouquet: Coglobular Complex in $\Mnd\Ar$ of the $n$-Transformations($n\in {\mathbb{N}^*}$).} 
   \label{FinalBouquet}  
    This fifth paragraph is a kind of "Final Bouquet" of some kind of coglobular and globular complex construction of the $n$-Transformations. Indeed we show briefly that the monads of the $n$-Transformations are p.r.a which allow us to exhibit the Coglobular Complex in $\Mnd\Ar$ of the $n$-Transformations and also the Globular Complex of Nerves of the $n$-Transformations, and finally the equivalence in $\mathbb{G}lob(\Cat)$ which express the definition of the $n$-Transformations by theories, which is the outcome of this article.
   
  It is well known that 
  $(\omega-\mathbb{G}r, (\underline{B^{0}}, \eta^{0}, \mu^{0}))$ is a monad p.r.a 
  \cite{web}. In fact we 
  are going to see that all monads of the $n$-Transformations ($n\in {\mathbb{N}^*}$) have this property
  
 \begin{theorem}
  \label{thMonadeNSCpra} 
For all $i\geqslant 1$ the monad 
$(\omega-\mathbb{G}r/1\bigcup 2, (\underline{B^{i}}, \eta^{i}, \mu^{i}))$ is p.r.a
 \end{theorem}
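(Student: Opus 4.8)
The plan is to unwind the definition of a p.r.a.\ monad into its two clauses --- that the endofunctor $\underline{B^{i}}$ be a parametric right adjoint and that the unit $\eta^{i}$ and multiplication $\mu^{i}$ be cartesian --- and to obtain the first clause from \cref{PropoCart} by comparing $\underline{B^{i}}$ with a monad that is manifestly p.r.a. The natural candidate is the free strict bicolored $\omega$-category monad on $\omega-\mathbb{G}r/1\cup 2$, which I will denote $T_{s}$: it is the monad attached to the \emph{terminal} globular colored operad on the colors $1\cup 2$, and concretely $T_{s}(G)$ is the underlying bicolored $\omega$-graph of the free strict $\omega$-category generated by $G$, every formal composite being forced to respect the colouring. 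Because the operad $B^{i}$ is contractible it admits a unique map to the terminal colored operad, and functoriality of the passage from an operad to its monad turns this into a morphism of monads $\theta^{i}\colon \underline{B^{i}}\Rightarrow T_{s}$.

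First I would observe that $\theta^{i}$ is a cartesian transformation between cartesian functors. This is precisely the sense in which the globular operads of \cite{bat,k} are defined: they are the cartesian monads equipped with a cartesian monad map to the free strict (here bicolored) $\omega$-category monad. Consequently the cartesianness of $\theta^{i}$ is part of the data furnished by \cite{k}, and so is the cartesianness of $\eta^{i}$ and $\mu^{i}$, which settles the second clause of the definition; alternatively the latter follows formally once $\theta^{i}$ is known to be cartesian, since $T_{s}$ has cartesian unit and multiplication and a cartesian natural transformation into a cartesian monad reflects these pullback squares.

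The substantive point is then to show that $T_{s}$ is itself p.r.a, for once this is in hand \cref{PropoCart} applied to $\theta^{i}$ delivers that $\underline{B^{i}}$ is p.r.a.\ as a functor and the proof is finished. For the uncolored free strict $\omega$-category monad this is classical and is exactly what underlies the cited fact that $(\omega-\mathbb{G}r,\underline{B^{0}})$ is p.r.a; the work is to transport it across the slice. Here \cref{PropoDomain} is the appropriate device: the domain functor $\omega-\mathbb{G}r/1\cup 2\to\omega-\mathbb{G}r$ is p.r.a, and I would combine this with the familial description of the strict monad to verify that the comparison $(T_{s})_{1}$ into the slice $(\omega-\mathbb{G}r/1\cup 2)/T_{s}(1)$ admits a left adjoint. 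I expect this verification, rather than any of the formal reductions, to be the main obstacle, precisely because the colouring restricts which composites are legal, so that the underlying graph of $T_{s}(G)$ is \emph{not} that of the uncolored free strict $\omega$-category on the underlying graph of $G$ and the left adjoint must be described by hand.

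Finally I would make the argument uniform in $i$ by reading it off the coglobular complex of \cref{CogloKleisli}. For $i\geqslant 1$ the structural morphisms $\delta^{i}_{i+1}$ and $\kappa^{i}_{i+1}$ have identity functor part, so they are honest monad maps on $\omega-\mathbb{G}r/1\cup 2$, and the comparisons $\theta^{i}$ are compatible with them. This lets the cartesianness checks and the comparison with $T_{s}$ be carried out once along the whole complex rather than separately for each $i$, which is the cleanest way to package the statement for all $i\geqslant 1$ at once.
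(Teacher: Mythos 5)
Your overall strategy --- dominate $\underline{B^{i}}$ by a cartesian transformation into a functor already known to be p.r.a.\ and invoke \cref{PropoCart} --- is the same as the paper's, but your choice of comparison object creates a genuine gap. You compare $\underline{B^{i}}$ with the free strict \emph{bicolored} $\omega$-category monad $T_{s}$ on the slice $\omega-\mathbb{G}r/1\cup 2$, and you explicitly defer the proof that $T_{s}$ is p.r.a, calling the by-hand construction of a left adjoint to $(T_{s})_{1}$ ``the main obstacle.'' That deferred step is not a formality: it is exactly as hard as the theorem itself (indeed $T_{s}$ is just the monad of another colored operad, the terminal one), so as written the argument reduces the problem to an instance of the same problem and never discharges it.

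The paper closes precisely this gap by never working with a colored strict monad at all. It takes the cartesian natural transformation $U_{1\cup 2}\,\underline{B^{i}}\Rightarrow \mathbb{T}\,U_{1\cup 2}$ furnished by Leinster's construction of the monad from a colored operad, where $U_{1\cup 2}$ is the domain functor of \cref{PropoDomain} and $\mathbb{T}$ is the \emph{uncolored} free strict $\omega$-category monad, already p.r.a.\ by \cite{str}. Since $\mathbb{T}\,U_{1\cup 2}$ is a composite of p.r.a.\ functors, \cref{PropoCart} gives that $U_{1\cup 2}\,\underline{B^{i}}$ is p.r.a.; the key observation you are missing is then that, under the canonical identification $(\omega-\mathbb{G}r/1\cup 2)/\underline{B^{i}}(1_{1\cup 2})\cong \omega-\mathbb{G}r/U_{1\cup 2}\underline{B^{i}}(1_{1\cup 2})$, the two comparison functors coincide, $({\underline{B^{i}}})_{1}=({U_{1\cup 2}\underline{B^{i}}})_{1}$, so the left adjoint for the latter \emph{is} the left adjoint for the former and nothing needs to be described by hand. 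If you want to keep your formulation via $T_{s}$, you should prove $T_{s}$ is p.r.a.\ by this same slice trick rather than leaving it open; your remarks on the cartesianness of $\eta^{i}$ and $\mu^{i}$ (part of the operadic data of \cite{k}) and on uniformity in $i$ are fine but peripheral.
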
 
 \begin{proof}
 
 By definition of the monads $(\omega-\mathbb{G}r/1\bigcup 2, (\underline{B^{i}}, \eta^{i}, \mu^{i}))$ we have
 the following natural transformation (see \cite[propo 6.2.1 p 153]{lein} for its construction)
 
  \[\xymatrix{&\omega-\mathbb{G}r/1\bigcup 2\ar[rd]^{U_{1\bigcup 2}}&\\
   \omega-\mathbb{G}r/1\bigcup 2 \ar[ru]^{\underline{B^{i}}}\ar[rd]_{U_{1\bigcup 2}}&&\omega-\mathbb{G}r\\
   &\omega-\mathbb{G}r \ar[ru]_{\mathbb{T}}&}\]   
   
 where $U_{1\bigcup 2}$ is the functor domain of the \cref{PropoDomain} and $\mathbb{T}$ is the monad
   of the strict $\omega$-categories. It is easy to see that $U_{1\bigcup 2}$ is cartesian and p.r.a. The monad 
    $\mathbb{T}$ is p.r.a as well \cite{str}. Thus the \cref{PropoCart} show that 
    the functor $U_{1\bigcup 2}\underline{B^{i}}$ is p.r.a. But then $\underline{B^{i}}$ is also p.r.a  because with
    the factorisation of $\underline{B^{i}}$ and  $U_{1\bigcup 2}\underline{B^{i}}$  
 
          \[\xymatrix{\omega-\mathbb{G}r/1\bigcup 2
          \ar[rd]_{{\underline{B^{i}}}_{1}}\ar[rr]^{\underline{B^{i}}}&&\mathcal{B}\\
    &(\omega-\mathbb{G}r/1\bigcup 2)/\underline{B^{i}}(1_{1\bigcup 2})\ar[ru]_{cod}}\]

  \[\xymatrix{\omega-\mathbb{G}r/1\bigcup 2
          \ar[rd]_{({U_{1\bigcup 2}\underline{B^{i}}})_{1}}\ar[rr]^{U_{1\bigcup 2}\underline{B^{i}}}&&\omega-\mathbb{G}r\\
    &\omega-\mathbb{G}r/U_{1\bigcup 2}\underline{B^{i}}(1_{1\bigcup 2})\ar[ru]_{cod}}\]  
    
   we can directly see that  ${\underline{B^{i}}}_{1}=({U_{1\bigcup 2}\underline{B^{i}}})_{1}$
  \end{proof}
  
  Thus the objects  of  $\Ar\Mnd$:
  $((\Theta_{0} \sqcup\Theta_{0}, i_{0},\omega-\mathbb{G}r/1\cup 2),(\underline{B^{i}}, \eta^{i}, \mu^{i}))$
 $(i\geqslant 1)$ are more precisely objects of $\Mnd\Ar$. So we obtain the coglobular complex in   
  $\Mnd\Ar$ of the $n$-Transformations
  
   \begin{multline*}
   \xymatrix{((\omega-\mathbb{G}r, i_{0},\Theta_{0}),(\underline{B^{0}}, \eta^{0}, \mu^{0}))
   \ar[r]<+2pt>^-{\delta^{0}_{1}}
    \ar[r]<-2pt>_-{\kappa^{0}_{1}}& 
 ((\omega-\mathbb{G}r/1\cup 2 , i_{0}, \Theta_{0}\sqcup\Theta_{0}),(\underline{B^{1}}, \eta^{1}, \mu^{1})) 
 \ar[r]<+2pt>^-{\delta^{1}_{2}}\ar[r]<-2pt>_(.85){\kappa^{1}_{2}}&...}\\
\xymatrix{((\omega-\mathbb{G}r/1\cup 2 , i_{0}, \Theta_{0} \sqcup\Theta_{0}),(\underline{B^{i}}, \eta^{i}, \mu^{i}))    
   \ar[r]<+2pt>^-{\delta^{i}_{i+1}}
	   \ar[r]<-2pt>_-{\kappa^{i}_{i+1}}&...}
 \end{multline*}
  which brings to light  the Globular Complex of Nerves of the $n$-Transformations
  
  \[\xymatrix{\ar@{.>}[r]<+2pt>^{}\ar@{.>}[r]<-2pt>_{}
  &\underline{B^{n}}-\mathbb{A}lg\ar[d]_{N_{\underline{B^{n}}}}\ar[r]<+2pt>^{\sigma^{n}_{n-1}}\ar[r]<-2pt>_{\beta^{n}_{n-1}}
  &\underline{B^{n-1}}-\mathbb{A}lg\ar[d]^{N_{\underline{B^{n-1}}}}\ar@{.>}[r]<+2pt>^{}\ar@{.>}[r]<-2pt>_{}
  &\underline{B^{1}}-\mathbb{A}lg\ar[d]_{N_{\underline{B^{1}}}}\ar[r]<+2pt>^{\sigma^{1}_{0}}\ar[r]<-2pt>_{\beta^{1}_{0}}
  &\underline{B^{0}}-\mathbb{A}lg\ar[d]^{N_{\underline{B^{0}}}}\\
  \ar@{.>}[r]<+2pt>^{}\ar@{.>}[r]<-2pt>_{}&
  \widehat{\Theta_{\underline{B^{n}}}}\ar[r]<+2pt>^{\sigma^{n}_{n-1}}\ar[r]<-2pt>_{\beta^{n}_{n-1}}&  
  \widehat{\Theta_{\underline{B^{n-1}}}} \ar@{.>}[r]<+2pt>^{}\ar@{.>}[r]<-2pt>_{}&
    \widehat{\Theta_{\underline{B^{1}}}}\ar[r]<+2pt>^{\sigma^{1}_{0}}\ar[r]<-2pt>_{\beta^{1}_{0}}&  
\widehat{\Theta_{\underline{B^{0}}}}}\]  
  

  which finally achieve the goal of this paper by showing
   the following equivalence in $\mathbb{G}lob(\Cat)$ given by the Nerves Functors

  \[\xymatrix{\ar@{.>}[r]<+2pt>^{}\ar@{.>}[r]<-2pt>_{}
  &\underline{B^{n}}-\mathbb{A}lg\ar[d]_{N_{\underline{B^{n}}}}\ar[r]<+2pt>^{\sigma^{n}_{n-1}}\ar[r]<-2pt>_{\beta^{n}_{n-1}}
  &\underline{B^{n-1}}-\mathbb{A}lg\ar[d]^{N_{\underline{B^{n-1}}}}\ar@{.>}[r]<+2pt>^{}\ar@{.>}[r]<-2pt>_{}
  &\underline{B^{1}}-\mathbb{A}lg\ar[d]_{N_{\underline{B^{1}}}}\ar[r]<+2pt>^{\sigma^{1}_{0}}\ar[r]<-2pt>_{\beta^{1}_{0}}
  &\underline{B^{0}}-\mathbb{A}lg\ar[d]^{N_{\underline{B^{0}}}}\\
  \ar@{.>}[r]<+2pt>^{}\ar@{.>}[r]<-2pt>_{}&
  \Mod(\Theta_{\underline{B}^n})\ar[r]<+2pt>^{\sigma^{n}_{n-1}}\ar[r]<-2pt>_{\beta^{n}_{n-1}}&  
  \Mod(\Theta_{\underline{B}^{n-1}})\ar@{.>}[r]<+2pt>^{}\ar@{.>}[r]<-2pt>_{}&
    \Mod(\Theta_{\underline{B}^{1}})\ar[r]<+2pt>^{\sigma^{1}_{0}}\ar[r]<-2pt>_{\beta^{1}_{0}}&  
\Mod(\Theta_{\underline{B^{0}}})}\]

    \vspace{1cm}

       \bigbreak{}
  \begin{minipage}{1.0\linewidth}
    Camell \textsc{Kachour}\\
    Macquarie University, Department of Mathematics\\
    Phone: 00612 9850 8942\\
    Email:\href{mailto:camell.kachour@mq.edu.au}{\url{camell.kachour@mq.edu.au}}
  \end{minipage}
  
\end{document}